\documentclass[12pt, reqno]{amsart}
\usepackage[margin=1in]{geometry}
\usepackage[english]{babel}
\usepackage[latin1]{inputenc}
\usepackage{amssymb}
\usepackage{amsmath}
\usepackage{arydshln}
\usepackage{booktabs}
\usepackage{graphicx}
\usepackage{epsfig}
\usepackage{enumitem}
\usepackage{hyperref}
\usepackage{mathrsfs}
\usepackage{mathtools}
\usepackage[new]{old-arrows}
\usepackage[all]{xy}
\usepackage[usenames, dvipsnames]{color}
\usepackage{url}

\usepackage{graphicx}
\usepackage{caption}
\usepackage{subcaption}
\usepackage[]{hyperref}

\newcommand{\E}{\mathcal{E}}
\newcommand{\C}{\mathcal{C}}
\newcommand{\R}{\mathbb{R}}
\newcommand{\N}{\mathbb{N}}

\newcommand{\Ss}{\mathcal{S}}
\newcommand{\T}{\mathcal{T}}
\newcommand{\D}{\mathcal{D}}
\newcommand{\M}{\mathcal{M}}

\newcommand{\Rem}{\operatorname{Rem}}

\newtheorem{theorem}{Theorem}
\newtheorem{lemma}[theorem]{Lemma}

\newtheorem{conj}[theorem]{Conjecture}

\newtheorem*{question*}{Question}

\theoremstyle{definition}

\theoremstyle{remark}
\newtheorem{rem}[theorem]{Remark}
\newtheorem*{remark}{Remark}

\makeatletter
\let\c@table\c@figure 
\let\ftype@table\ftype@figure 
\makeatother

\begin{document}

\title{Degeneration of the spectral gap with negative Robin parameter}
\author{Derek Kielty}
\email{dkielty2@illinois.edu}
\address{Department of Mathematics, University of Illinois, Urbana, IL, 61801, USA}

\maketitle  

\begin{abstract}
The spectral gap of the Neumann and Dirichlet Laplacians are each known to have a sharp positive lower bound among convex domains of a given diameter. Between these cases, for each positive value of the Robin parameter an analogous sharp lower bound on the spectral gap is conjectured. In this paper we show the extension of this conjecture to negative Robin parameters fails by proving that the spectral gap of double cone domains are exponentially small, for each fixed parameter value.
\end{abstract}

\section{\textbf{Introduction}}
The Robin Laplacian eigenvalue problem with parameter $\alpha$ on a bounded Lipschitz domain $\D$ is
\begin{equation*}
\begin{cases}
      -\Delta u = \lambda u \quad &\text{on} ~ \D, \\
      \partial_{\nu}u + \alpha u = 0 \quad &\text{on}~ \partial \D.
\end{cases}
\end{equation*} 
It has a discrete spectrum of eigenvalues for each $\alpha \in (-\infty, \infty)$:
\[\lambda_1(\D) < \lambda_2(\D) \leq \lambda_3(\D) \leq \dots \to \infty.\]
Recall that when $\alpha = 0$ this is the Neumann eigenvalue problem. The limiting case $\alpha = \infty$ corresponds to Dirichlet boundary conditions. See \cite[Chapter 4]{Henrot} for foundational material on the Robin problem.

In this paper we prove that when $\alpha < 0$ the spectral gap 
\[(\lambda_2 - \lambda_1)(\D)\]
can be arbitrarily small among domains $\D$ of a given diameter. Investigation of this result for $\alpha < 0$ was motivated by the following conjecture of Andrews, Clutterbuck, and Hauer \cite[\S 10]{ACH} for $\alpha \geq 0$: 
\begin{conj}[Robin gap]
\label{conj:Gap}
If $\alpha \geq 0$ and $\D \subset \R^n$ is a bounded convex domain then
\[(\lambda_2 - \lambda_1)(\D) \geq (\lambda_2 - \lambda_1)(I),\]
where $I \subset \R$ is an open interval of length equal to the diameter of $\D$.
\end{conj}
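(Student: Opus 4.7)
The plan is to extend the two-point maximum principle approach of Andrews and Clutterbuck, who proved the analogous bound in the Dirichlet limit $\alpha = +\infty$, to the Robin case $\alpha \ge 0$. The target constant $\mu(\alpha, L) := (\lambda_2 - \lambda_1)(I)$ for an interval $I$ of length $L = \mathrm{diam}(\D)$ is an explicit function of $(\alpha, L)$, determined by the standard transcendental equations for Robin eigenvalues on an interval and interpolating between the Neumann gap $(\pi/L)^2$ and the Dirichlet gap $3(\pi/L)^2$; the goal is $(\lambda_2 - \lambda_1)(\D) \ge \mu(\alpha, \mathrm{diam}(\D))$.

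First I would set $v := u_2/u_1$, which is well defined because $u_1 > 0$. A direct calculation gives
\[
-\Delta v - 2 \nabla(\log u_1) \cdot \nabla v = (\lambda_2 - \lambda_1) v \qquad \text{in } \D,
\]
and the Robin conditions on $u_1$ and $u_2$ cancel to produce the Neumann condition $\partial_\nu v = 0$ on $\partial \D$. Following the Andrews--Clutterbuck framework, I would then study the two-point function
\[
Z(x,y) := v(x) - v(y) - 2\, \psi\!\left(\tfrac{|x-y|}{2}\right) \qquad \text{on } \overline{\D} \times \overline{\D},
\]
where $\psi$ is the first antisymmetric eigenfunction of the one-dimensional Robin model and the normalization is chosen so that $Z \le 0$ initially. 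If $\sup Z > 0$ is attained at an interior pair $(x_0, y_0)$, then the second-derivative test along the segment joining $x_0$ and $y_0$, combined with a modulus-of-concavity estimate of the form
\[
-D^2 \log u_1 \ge -(\log \phi_1)''
\]
along that segment (where $\phi_1$ is the first one-dimensional Robin eigenfunction), would force $\lambda_2 - \lambda_1 \ge \mu(\alpha, L)$, completing the proof.

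The hard part is twofold. First, establishing the sharp modulus-of-concavity estimate for $\log u_1$ under Robin data: for Dirichlet conditions, Brascamp--Lieb log-concavity together with the Korevaar bootstrap yields the required bound, but both are obstructed by the nontrivial boundary behaviour of $u_1$ when $\alpha$ is finite. One would instead set up a parabolic flow whose stationary solution is $u_1$ and run a maximum principle on $D^2 \log u$; the crucial boundary contribution turns out to be proportional to $\alpha u_1$ and carries the correct sign only when $\alpha \ge 0$. Second, one must exclude the possibility that $\sup Z$ is attained at a boundary pair. For Dirichlet this follows from $u_1$ vanishing on $\partial \D$; for Robin one must combine convexity of $\D$ with the identities $\partial_\nu v = 0$ and $\partial_\nu u_1 = -\alpha u_1$, using a Hopf-type argument whose sign again requires $\alpha \ge 0$. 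Both obstructions collapse in the negative regime, a failure which is consistent with (and indeed predicted by) the main result of the present paper.
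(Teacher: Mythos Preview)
The statement you are attempting to prove is labeled as a \emph{conjecture} in the paper (Conjecture~\ref{conj:Gap}), and the paper contains no proof of it. The authors explicitly note that the conjecture is known only in the endpoint cases $\alpha = 0$ (Payne--Weinberger) and $\alpha = \infty$ (Andrews--Clutterbuck), while the intermediate range $\alpha \in (0,\infty)$ remains open. There is therefore no ``paper's own proof'' against which to compare your attempt.

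Your proposal is not a proof but a research outline, and you essentially concede this yourself in the final paragraph. The two obstructions you identify are real and, to date, unresolved. In particular: (i) the modulus-of-concavity estimate $-D^2 \log u_1 \ge -(\log \phi_1)''$ is the heart of the Andrews--Clutterbuck argument, and the Brascamp--Lieb/Korevaar machinery that supplies it in the Dirichlet case does not carry over to Robin boundary conditions --- your proposed parabolic bootstrap is a plausible direction, but ``the crucial boundary contribution turns out to be proportional to $\alpha u_1$ and carries the correct sign'' is an assertion, not an argument, and making it precise is exactly the open problem; (ii) ruling out boundary maxima of the two-point function $Z$ via a Hopf-type argument is likewise not established, and the interaction between the Neumann condition on $v$ and the curvature of $\partial \D$ is subtle. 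Until both steps are actually carried out, what you have written is a strategy rather than a proof, and the conjecture remains open.
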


The Robin gap conjecture is known to hold in the Neumann case ($\alpha = 0$) due to Payne and Weinberger \cite{PW}. A new proof was given by Andrews and Clutterbuck \cite[pp. 901]{AC}. In the Dirichlet case ($\alpha = \infty$) it is known, not only for the Laplacian, but also for Schr\"odinger operators with a convex potential, thanks to Andrews and Clutterbuck \cite[Corollary 1.4]{AC}.

\begin{figure}
    \centering
    \includegraphics{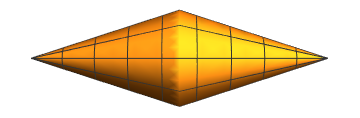}
    \caption{The \emph{double cone} domain $\D_{\theta}$ in 3-dimensions with $\theta = \pi/5$.}
    \label{fig:DblCn}
\end{figure}

To state our main result, define the family of convex \emph{double cone} domains of angle $\theta$ by
\[\D_{\theta} = \{(x,y) \in (-1,1) \times \R^{n-1} : |y| < \tan(\theta/2) (1 - |x|)\}, \quad \text{for each} ~ \theta \in (0,\pi),\]
as shown in Figure \ref{fig:DblCn}.

\begin{theorem}[Degeneration of the Robin gap]
\label{thm:main}
If $\alpha < 0$ then $(\lambda_2 - \lambda_1)(\D_{\theta}) \to 0$ as $\theta \to 0$. In fact, if $\alpha < 0$ then for each $\epsilon > 0$ there exists a constant $C > 0$ such that
\begin{equation}
\label{eq:GapInq}
    (\lambda_2 - \lambda_1)(\D_{\theta}) \leq C \exp\{-4(1 - \epsilon) |\alpha|/\theta\}, \quad \text{for all } \theta \text{ sufficiently small}.
\end{equation}
\end{theorem}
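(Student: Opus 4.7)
The strategy is to exploit the reflection symmetry $(x,y) \mapsto (-x,y)$ of $\D_\theta$ to identify the spectral gap with the splitting between even and odd parity sectors, reformulated as a gap between two mixed Robin eigenvalue problems on the half-cone $\D_\theta^+ := \D_\theta \cap \{x > 0\}$, and then to bound this gap via an IMS-type localization identity together with an Agmon-type exponential decay estimate.

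Let $\mu_N$ and $\mu_D$ denote, respectively, the lowest eigenvalues on $\D_\theta^+$ with Robin parameter $\alpha$ on the slanted sides and Neumann or Dirichlet condition on the flat cross-section $\{x=0\}$. Reflection identifies $\mu_N$ and $\mu_D$ with the lowest eigenvalues of the even- and odd-parity subspaces on $\D_\theta$. Since $\lambda_1(\D_\theta)$ admits a positive (hence symmetric) eigenfunction, one has $\lambda_1(\D_\theta) = \mu_N$, while the odd extension of the first Dirichlet-at-middle eigenfunction supplies a test vector orthogonal to the ground state on $\D_\theta$, so that $\lambda_2(\D_\theta) \leq \mu_D$. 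The task reduces to bounding $\mu_D - \mu_N$. A direct computation shows that $g(x) := e^{-\gamma(1-x)}$, with $\gamma := |\alpha|/\sin(\theta/2)$, is an exact eigenfunction of $-\Delta$ with Robin data $\alpha$ on the slants of the infinite cone, with eigenvalue $-\gamma^2$. Therefore $\mu_N \leq -\gamma^2$, and in particular $\sqrt{-\mu_N} \geq \gamma \sim 2|\alpha|/\theta$.

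Let $w_N$ be the $L^2$-normalized Neumann-at-middle ground state on $\D_\theta^+$. The key analytic input is an Agmon-type exponential decay estimate, obtained by multiplying $-\Delta w_N = \mu_N w_N$ by $e^{2\phi} w_N$ with weight $\phi(x) = \gamma'(1-x)$ for $\gamma'$ slightly below $\sqrt{-\mu_N}$, yielding
\[
\int_{\D_\theta^+ \cap \{x \leq \delta\}} w_N^2 \;\leq\; C\, e^{-2\gamma'(1-\delta)}
\]
for any small $\delta \in (0,1)$. Now introduce a smooth cutoff $\chi(x)$ vanishing on $\{x \leq \delta/2\}$ and equal to $1$ on $\{x \geq \delta\}$, so that $\chi w_N$ is an admissible trial function for the Dirichlet-at-middle problem. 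Multiplying the eigenvalue equation by $\chi^2 w_N$ and integrating by parts, the Robin contributions on the slants cancel exactly against the Robin term in the Rayleigh quotient and produce the IMS-type identity
\[
R[\chi w_N] \;=\; \mu_N \;+\; \frac{\int_{\D_\theta^+} |\nabla \chi|^2 w_N^2}{\int_{\D_\theta^+} \chi^2 w_N^2}.
\]
The numerator is controlled by $C\delta^{-2}e^{-2\gamma'(1-\delta)}$ via the decay bound above, while the denominator is bounded below uniformly in $\theta$. Combining with $\mu_D - \mu_N \leq R[\chi w_N] - \mu_N$ and optimizing $\gamma'$ and $\delta$ delivers the claimed bound.

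The main obstacle I anticipate is the Agmon estimate in the negative-Robin setting: the slanted boundary contributes to the weighted energy identity with the unfavorable sign of $\alpha < 0$, so one must verify that it can be absorbed into the weighted bulk surplus $-\mu_N - |\nabla\phi|^2$. This forces $\gamma'$ to lie strictly below $\sqrt{-\mu_N}$, which in turn is why the factor $(1-\epsilon)$, rather than unity, appears in the exponent of the main theorem.
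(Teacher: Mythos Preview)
Your strategy is genuinely different from the paper's and is the natural ``semiclassical tunneling'' picture for this double-well situation. The paper never works with the pair $(\mu_N,\mu_D)$; instead it bounds $\lambda_2(\D_\theta)$ and $\lambda_1(\D_\theta)$ separately against the infinite-cone ground state energy $\lambda_1(\C_\theta)=-\gamma^2$. The upper bound on $\lambda_2$ is a trial-function argument with the odd superposition of two translated copies of $g$, much as your $\mu_D$ trial function would be. For the lower bound on $\lambda_1$ the paper takes a very different route: it performs the ground-state transform $u\mapsto u/g$ on the half-cone (this is its Step~1), then ``pushes out'' the truncated cone to a spherical sector via an explicit diffeomorphism (Step~2), rescales to a ball of radius $R=\cot(\theta/2)$ (Step~3), and finally lands on the Coulomb-type Schr\"odinger problem $-\Delta-\kappa/r$ on $B(R)$, whose first eigenvalue is pinned down by a confluent-hypergeometric analysis (Lemma~2 in the paper). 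Your IMS/Agmon route would bypass all of the special-function machinery, which is attractive.

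The genuine gap in your sketch is the Agmon estimate itself. Multiplying $-\Delta w_N=\mu_N w_N$ by $e^{2\phi}w_N$ and integrating by parts produces the identity
\[
\int_{\D_\theta^+}|\nabla v|^2+\alpha\int_{\text{slants}}v^2=(\mu_N+(\gamma')^2)\int_{\D_\theta^+}v^2,\qquad v=e^{\phi}w_N,
\]
but the only a priori lower bound available for the left side is $\mu_N\|v\|^2$ (the variational principle), which yields nothing. Your proposed mechanism, absorbing the slant term into the volume surplus $-\mu_N-|\nabla\phi|^2$, does not work as stated: the surplus is a constant multiplying a \emph{volume} integral, while the Robin term is a \emph{surface} integral, and any trace inequality linking them carries a constant that blows up as $\theta\to 0$ (the cross-sections collapse). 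So some additional structural input is needed before the exponential decay of $w_N$ away from the vertex follows.

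The cleanest repair is to first perform the ground-state transform by your own function $g(x)=e^{-\gamma(1-x)}$: since $g$ satisfies the Robin condition on the slants exactly, the quotient $w_N/g$ solves a weighted equation with \emph{Neumann} data on the slants and weight $g^2$ concentrated at the vertex; at that point a standard Agmon (or even a maximum-principle) argument gives the decay you want, with constants that are at worst polynomial in $1/\theta$ and hence harmlessly absorbed by the $(1-\epsilon)$ in the exponent. Note that this transform is precisely the paper's Step~1; what differs is that the paper then proceeds via geometry and special functions, whereas you would finish with Agmon/IMS. With that repair your argument goes through and gives the correct exponent $4(1-\epsilon)|\alpha|/\theta$.
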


The scaling relation $\lambda_j(t\D;\alpha) = t^{-2} \lambda_j(\D;t\alpha)$ then shows that $(\lambda_2 - \lambda_1)(t \D_{\theta}) \to 0$ as $\theta \to 0$, for each $t > 0$. Since $t\D_{\theta}$ has diameter $2t$ when $\theta \in (0,\pi/2]$, Theorem \ref{thm:main} implies that among convex $\D$ of any given diameter, the spectral gap $(\lambda_2 - \lambda_1)(\D)$ can be made arbitrarily small when $\alpha < 0$. In particular, since $(\lambda_2 - \lambda_1)(I) > 0$ there exist convex domains $\D$ such that $(\lambda_2 - \lambda_1)(\D) < (\lambda_2 - \lambda_1)(I)$, and so the Robin gap conjecture fails to extend to $\alpha < 0$.

We believe the upper bound in Theorem \ref{thm:main} is sharp in the sense that the estimate (\ref{eq:GapInq}) fails when $\epsilon = 0$ as $\theta \to 0$. Although we do not prove this the proof of our upper bound on $\lambda_2(\D_{\theta})$ and Lemma \ref{lem:SF} suggests it is true.

The remainder of this paper is structured as follows: in Section 2 we discuss a heuristic explaining Theorem \ref{thm:main}, in Section 3 we present a brief overview of relevant literature, in Section 4 we prove a lemma about convergence of Schr\"odinger eigenvalues, and in Section 5 we prove Theorem \ref{thm:main}. Finally, in Section 6 we suggest a variant of Theorem \ref{thm:main} to a modification of the domains $\D_{\theta}$ and discuss the possibility of extending Conjecture \ref{conj:Gap} for $\alpha < 0$ to a restricted class of domains. 

\section{\textbf{A heuristic for small gaps in Theorem \ref{thm:main}: disjoint union of cones}}

The Robin eigenvalue problem with $\alpha < 0$ on the infinite cone of opening angle $\theta < \pi$
\begin{equation}
\label{eq:InfCone}
\C_{\theta} = \{(x,y) \in (0,\infty) \times \R^{n-1} : |y| < \tan(\theta/2) x\},
\end{equation}
has at least one negative eigenvalue and a corresponding ground state (see equation (\ref{eq:PhiDef})) that concentrates exponentially fast (in $L^2$-norm), as $\theta \to 0$. In the 2-dimensional case, it is known that \emph{all} the eigenfunctions concentrate in this fashion (see \cite[Theorem 5.1]{KP}). 

Because of this concentration we expect that the eigenfunctions of $\D_{\theta}$ concentrate at each vertex $(\pm 1, 0)$ like the ground state of $\C_{\theta}$ concentrates at the origin. Consequently, for small $\theta$ the Robin problem on $\D_{\theta}$ should be approximated by the Robin problem on two copies of $\C_{\theta}$. This approximation suggests $(\lambda_2 - \lambda_1)(\D_{\theta}) \to 0$ as $\theta \to 0$, because the first eigenvalue on $\C_{\theta} \sqcup \C_{\theta}$ has multiplicity two and hence the spectral gap equals zero.

Proving the gap tends to zero is a difficult task because $\lambda_1(\C_{\theta}) = -\alpha^2/\sin^2(\theta/2) \sim \theta^{-2}$ (see equation (\ref{eq:PhiDef})). Thus, Theorem \ref{thm:main} says that even though $\lambda_1(\D_{\theta})$ and $\lambda_2(\D_{\theta})$ have size $\theta^{-2}$, they are still exponentially close as $\theta \to 0$. Indeed, since $\lambda_1(\D_{\theta}) \leq \lambda_2(\D_{\theta})$, estimate (\ref{eq:LB}) in the proof of Theorem \ref{thm:main} actually shows that for each $\epsilon > 0$
\[\lambda_1(\D_{\theta}),\lambda_2(\D_{\theta}) = -\frac{\alpha^2}{\sin^2(\theta/2)} + O(\exp\{-4(1 - \epsilon) |\alpha|/\theta\}), \quad \text{ as } \theta \to 0.\]

\section{\textbf{Literature}}

For an overview of the Robin Laplacian and related shape optimization problems see the article by Bucur, Freitas, and Kennedy \cite[Chapter 4]{Henrot}. In the study of superconductors, Robin boundary conditions with negative parameter are also known for their connection to Ginzburg-Landau theory \cite[\S 1.4]{KS}.

Recently, Ashbaugh and the current author \cite{AK}, as well as Andrews, Clutterbuck, and Hauer \cite{ACH2}, proved sharp lower bounds on the Robin spectral gap for 1-dimensional Schr\"odinger operators with various classes of single-well potentials. In contrast with Theorem \ref{thm:main}, some of these lower bounds continue to hold for some $\alpha < 0$.

The Robin Laplacian on the infinite cone with small aperture will play a crucial role throughout this paper. Khalile and Pankrashkin \cite{KP} computed eigenvalue asymptotics for the small aperture limit of 2-dimensional infinite cones. Recently, this was also done in higher dimensions by Pankrashkin and Vogel \cite{PV}. This geometric limit is related to the $\alpha \to -\infty$ limit, for which Khalile \cite{Khalile} proved that the first $N = N(\D)$ Robin eigenvalues of a fixed curvilinear polygon $\D$ are determined (to leading order in $\alpha$) by the angles of the vertices. For smooth domains the second term in the $\alpha \to -\infty$ asymptotic is determined by the mean curvature; see work of Pankrashkin and Popoff \cite{PP}. In related work, B\"ogli, Kennedy, and Lang \cite{BKL} proved convergence of certain eigenvalues of the Robin Laplacian to those of the Dirichlet Laplacian for certain limits $\alpha \to \infty$, for complex $\alpha$.

\section{\textbf{Convergence of Schr\"odinger eigenvalues on expanding balls}}
In this section we prove the convergence and calculate the convergence rate as $R \to \infty$ of the first eigenvalue $E = E(R)$ of the Schr\"odinger eigenvalue problem
\begin{equation}
\label{eq:Schro}
\begin{cases}
      (-\Delta - \frac{\kappa}{r}) \varphi = E \varphi \quad &\text{on} ~ B(R), \\
      (\partial_r + \gamma) \varphi = 0 \quad &\text{on} ~ \partial B(R),
\end{cases}
\end{equation} 
where $B(R)$ is the ball of radius $R$ centered at the origin, $\kappa > 0$ is constant, and $\gamma \in (-\kappa/(n-1),\infty]$. When $\gamma = \infty$ we impose Dirichlet boundary conditions on $\partial B(R)$. Since the potential in (\ref{eq:Schro}) is radial, (\ref{eq:Schro}) reduces to an ODE eigenvalue problem that we analyze using special functions. In the proof of Theorem \ref{thm:main} in a later section we will get a lower bound on $\lambda_1(\D_{\theta})$ by proving a lower bound on $\lambda_1(\D_{\theta}) - \lambda_1(\C_{\theta})$ in terms of $E(R)$, where $R = \cot(\theta/2) \to \infty$ as $\theta \to 0$.

The aforementioned ODE eigenvalue problem gives rise to the \emph{confluent hypergeometric equation}
\begin{equation}
\label{eq:CHGE}
\rho \frac{d^2F}{d\rho^2} + (b - \rho)\frac{dF}{d\rho} - aF = 0,
\end{equation}
for certain $a \in \R$ and $b \notin \{0,-1,-2,\dots\}$, as we shall see in the proof of the next lemma. The solution $F = F(a,b;\rho)$ is a linear combination of the confluent hypergeometric functions of the first and second kind, $M(a,b;\rho)$ and $U(a,b;\rho)$, respectively. For more information on this ODE see Chapter 13 by Slater of the book edited by Abramowitz and Stegun \cite{AS} or Chapter 13 of the NIST Digital Library of Mathematical Functions \cite{NIST:DLMF}. 

In Lemma \ref{lem:SF} below, we show that the first eigenvalue of (\ref{eq:Schro}) is determined by a positive solution $z$ of the transcendental equation
\begin{equation}
\label{eq:trans}
    2\frac{a_0}{b_0} M(a_0 + 1,b_0 + 1; \rho_0) = (1 - 2\gamma z/\kappa)M(a_0,b_0; \rho_0),
\end{equation}
where 
\[m = (n-3)/2,~ a_0 = a_0(z) = m + 1-  z,~ b_0 = 2 m + 2,~ \rho_0 = \rho_0(z) = \kappa R/z.\] 
When $\gamma = \infty$ the transcendental equation is $M(a_0,b_0;\rho_0) = 0$. To see this case formally, divide by $\gamma$ in (\ref{eq:trans}) and let $\gamma \to \infty$.

\begin{lemma}
\label{lem:SF}
Assume that $\gamma \in (-\infty,\infty]$ and $\kappa > 0$ are fixed. 

(i) If $R < \infty$ then the radial eigenfunctions of (\ref{eq:Schro}) with $E < 0$ are multiples of 
\begin{equation}
\label{eq:SchroEF}
M(a_0,b_0;2\sqrt{|E|} ~ r)e^{-\sqrt{|E|} ~ r}
\end{equation}
and have eigenvalues $E = -\kappa^2/4 z^2$, where $z$ is a positive solution of (\ref{eq:trans}). 

(ii) If $R = \infty$ (i.e.\ the domain is $\R^n$) then the radial eigenfunctions (that decay as $r \to \infty$) of (\ref{eq:Schro}) are multiples of (\ref{eq:SchroEF}) with eigenvalues $E = -\kappa^2/4(j + (n - 3)/2)^2$ for $j \in \N$. Furthermore, $M(a_0,b_0;2\sqrt{|E|} r)$ is a polynomial in $r$ of degree $j-1$.

(iii) If $\gamma \in (-\kappa/(n-1),\infty]$ then for each $\epsilon > 0$, there is a solution $z_*$ of (\ref{eq:trans}) that corresponds to the first eigenvalue $E$ of (\ref{eq:Schro}) and satisfies $z_*(R) = (n-1)/2 + O(\exp\{-2(1-\epsilon) \kappa R/(n-1)\})$, as $R \to \infty$. Consequently,
\[E(R) = -\kappa^2/(n-1)^2 + O\bigg(\exp \bigg\{-(1 - \epsilon)\frac{2 \kappa }{n-1} R \bigg\}\bigg), \quad \text{as } R \to \infty.\]
\end{lemma}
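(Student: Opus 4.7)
The Laplacian on radial functions in $\R^n$ reduces to $-\partial_r^2 - \tfrac{n-1}{r}\partial_r$, so (\ref{eq:Schro}) becomes an ODE on $(0,R)$. Motivated by the expected exponential decay I substitute $\varphi(r) = F(\rho)\,e^{-\sqrt{|E|}\,r}$ with $\rho = 2\sqrt{|E|}\,r$; a direct computation shows $F$ satisfies the confluent hypergeometric equation (\ref{eq:CHGE}) with $b = n-1 = b_0$ and $a = (n-1)/2 - \kappa/(2\sqrt{|E|}) = a_0$, where $z = \kappa/(2\sqrt{|E|})$. Regularity at $r=0$ excludes the linearly independent solution $U$ (which is singular at $\rho = 0$ for $b_0 \geq 1$), forcing $F$ to be a constant multiple of $M(a_0,b_0;\rho)$ and yielding (\ref{eq:SchroEF}). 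The Robin boundary condition $(\partial_r + \gamma)\varphi|_{r=R} = 0$ combined with the identity $M'(a,b;\rho) = (a/b)\,M(a{+}1,b{+}1;\rho)$ produces exactly (\ref{eq:trans}) with $\rho_0 = \kappa R/z$. For $R = \infty$, $L^2$-integrability rules out the dominant asymptotic $M(a_0,b_0;\rho) \sim \Gamma(b_0)\Gamma(a_0)^{-1}\, e^{\rho}\rho^{a_0-b_0}$ (which overwhelms $e^{-\rho/2}$), so $M$ must reduce to a polynomial; this happens precisely when $a_0 \in \{0,-1,-2,\ldots\}$, equivalently $z = j + (n-3)/2$ for $j \in \N$, with the polynomial degree equal to $-a_0 = j-1$.

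\textbf{Approach for (ii).} The crux is that $z = (n-1)/2$ corresponds to $a_0 = 0$, at which $M(0,b_0;\rho) \equiv 1$: then the left-hand side of (\ref{eq:trans}) vanishes (via the $a_0/b_0$ prefactor) while the right-hand side equals $A := 1 - \gamma(n-1)/\kappa$. I set $z = (n-1)/2 - \delta$ and estimate $\delta$ as $R \to \infty$. From the series $M(a,b;\rho) = 1 + a\sum_{k \geq 1}\rho^k/(k\,(b)_k) + O(a^2)$ together with the identity $\tfrac{d}{d\rho}\sum_{k\geq 1}\rho^k/(k\,(b)_k) = M(1,b+1;\rho)/b$ and the large-$\rho$ asymptotic $M(1,b+1;\rho) \sim \Gamma(b+1)\,e^{\rho}\rho^{-b}$, one obtains
\[ M(\delta,b_0;\rho_0) = 1 + \delta\,\Gamma(b_0)\,e^{\rho_0}\rho_0^{-b_0}(1 + o(1)) + O(\delta^2), \]
while $M(\delta+1,b_0+1;\rho_0) \sim \Gamma(b_0+1)\,e^{\rho_0}\rho_0^{-b_0}$ with no $\delta$-suppression since $\Gamma(\delta+1) \to 1$. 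Substituting into (\ref{eq:trans}) and collecting leading-order terms gives
\[ (2-A)\,\delta\,\Gamma(b_0)\,e^{\rho_0}\rho_0^{-b_0} = A + \text{lower order}, \]
and since $2 - A = 1 + \gamma(n-1)/\kappa > 0$ under the hypothesis $\gamma > -\kappa/(n-1)$, one inverts to $\delta = O(\rho_0^{b_0}\,e^{-\rho_0})$. With $\rho_0 = \kappa R/z \sim 2\kappa R/(n-1)$ and $b_0 = n-1$, this gives $\delta = O(R^{n-1}\,e^{-2\kappa R/(n-1)})$; absorbing the polynomial factor into the exponent at the cost of the factor $1 - \epsilon$ yields the stated bound on $z$, and expanding $E = -\kappa^2/(4z^2)$ about $z = (n-1)/2$ transfers the bound to $E$. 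The Dirichlet case $\gamma = \infty$ is handled identically by dividing (\ref{eq:trans}) by $\gamma$ first, producing the limiting equation $M(a_0,b_0;\rho_0) = 0$.

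\textbf{Main obstacle.} The delicate step is the leading-order cancellation in (\ref{eq:trans}): both sides individually contain the exponentially large quantity $\delta\,e^{\rho_0}\rho_0^{-b_0}$, and solving for $\delta$ hinges on the coefficient $1 + \gamma(n-1)/\kappa$ of their difference being strictly positive, which is exactly the hypothesis on $\gamma$. At the excluded endpoint $\gamma = -\kappa/(n-1)$ the $\R^n$ ground state $e^{-\kappa r/(n-1)}$ automatically satisfies the Robin condition on every $\partial B(R)$, so the first-order analysis degenerates entirely. A secondary task is confirming that the identified root is the smallest positive $z$: this follows from norm-resolvent convergence of the Robin Laplacian on $B(R)$ to its $\R^n$ counterpart, whose discrete negative spectrum from part (i) has ground state $-\kappa^2/(n-1)^2$ corresponding precisely to $z = (n-1)/2$, while the higher branches cluster near $z = j + (n-3)/2$ for $j \geq 2$ and are bounded away.
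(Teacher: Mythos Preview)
Your treatment of part (i) matches the paper's: both reduce to the radial ODE, transform to the confluent hypergeometric equation, discard the $U$-solution on regularity grounds, and read off (\ref{eq:trans}) from the Robin condition together with $M'=(a/b)M(a{+}1,b{+}1;\cdot)$. The $R=\infty$ case is also handled the same way (polynomial truncation when $a_0$ is a nonpositive integer).

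For part (ii) your route is genuinely different from the paper's. You linearise in $\delta = (n-1)/2 - z$ and balance the two exponentially large contributions in (\ref{eq:trans}) using the large-$\rho$ asymptotic of $M$, extracting $\delta \sim C\rho_0^{b_0}e^{-\rho_0}$ from the coefficient $2-A = 1+\gamma(n-1)/\kappa$; this treats all admissible $\gamma$ at once and explains transparently why the hypothesis $\gamma>-\kappa/(n-1)$ is sharp. The paper instead avoids any asymptotic expansion of $M$: for $|\gamma|<\kappa/(n-1)$ it sandwiches the root $\rho_0(z)$ between two explicit functions $\rho_\pm(z)$ by direct power-series inequalities and the intermediate value theorem, and for $\gamma \ge \kappa/(n-1)$ it invokes monotonicity of $E(R,\gamma)$ in $\gamma$ together with a cutoff trial-function bound on the Dirichlet problem. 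Your argument is shorter and more conceptual, but it leans on uniformity of the asymptotic $M(a,b;\rho)\sim \Gamma(b)\Gamma(a)^{-1}e^{\rho}\rho^{a-b}$ as $a\to 0$ and $\rho\to\infty$ simultaneously (your ``$O(\delta^2)$'' is not uniform in $\rho_0$ as written), which you would need to justify; the paper's approach is entirely elementary and self-contained. Finally, to certify that the root you find is the \emph{smallest} positive $z$ you appeal to norm-resolvent convergence, whereas the paper observes directly that for $0<z<m+1$ the power series of $M(a_0,b_0;\cdot)$ has positive coefficients, so the corresponding eigenfunction is positive and hence is the ground state---a much lighter argument that you could substitute for yours.
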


Part (iii) of the lemma shows that the first eigenvalue of (\ref{eq:Schro}) converges to $-\kappa^2/(n-1)^2$ as $R \to \infty$, which is the first eigenvalue of $-\Delta - \kappa/r$ on $\R^n$.

Computing the spectrum of the Schr\"odinger operator $-\Delta - \kappa/r$ on $B(R)$ and $\R^n$ is closely related to computing the spectrum of the hydrogen atom from quantum mechanics. In fact, on $\R^3$ this calculation is precisely the hydrogen atom calculation (see the book by Landau and Lifschitz \cite[Chapter V]{LL}, for example). For general $n \geq 3$, the ODE in (\ref{eq:Rad}) below is the radial part of the Schr\"odinger operator for the 3-D hydrogen atom with angular momentum $m$ (after changing variables). This physical interpretation breaks down when $n =2$ since in this case $m < 0$. The operator in (\ref{eq:Rad}) also appears in work of Khalile and Pankrashkin \cite{KP} as an effective operator for the Robin Laplacian with negative Robin parameter on $\C_{\theta}$ as $\theta \to 0$, when $n=2$. 

\begin{proof}[Proof of Lemma \ref{lem:SF}]

\textbf{Part (i):} 

Let $\varphi = \varphi(r,\xi)$ be an arbitrary radial eigenfunction of (\ref{eq:Schro}) with eigenvalue $E = E(R)$ written in $(r,\xi)$-spherical coordinates. Thus, $\varphi(r,\xi)$ is a constant times a function $u(r)$. In this case, $-\Delta - \kappa/r$ reduces to the radial part of the Laplacian plus the potential $-\kappa/r$ so that
\begin{equation*}
\begin{cases}
      -u'' - \frac{n-1}{r}u' - \frac{\kappa}{r} u = E u \quad &\text{on} ~ (0,R), \\
      u' + \gamma u = 0 \quad &\text{at} ~ r = R.
\end{cases}
\end{equation*}
In the above problem and the ones below we impose Dirichlet boundary conditions when $\gamma = \infty$. Let $m = (n-3)/2$ and $v(r) = r^{m+1} u(r)$ so that
\begin{equation}
\label{eq:Rad}
\begin{cases}
      -v'' + \frac{m(m + 1)}{r^2}v - \frac{\kappa}{r} v = E v \quad &\text{on} ~ (0,R), \\
      v' +(\gamma - \frac{m+1}{r})v = 0 \quad &\text{at} ~ r = R.
\end{cases}
\end{equation}

Since $E < 0$ we can define $ z = \kappa(2\sqrt{|E|})^{-1} > 0$, $\rho = \kappa r/  z$, and $w(\rho) = v( z \rho /\kappa)$. It follows that $w$ satisfies 
\begin{equation*}
\begin{cases}
      -w'' + \frac{m(m + 1)}{\rho^2}w - (\frac{ z}{\rho} - \frac{1}{4})w = 0 \quad &\text{on} ~ (0,\rho_0), \\
      w' + (\frac{ z \gamma}{\kappa} - \frac{m+1}{\rho})w = 0 \quad &\text{at} ~ \rho = \rho_0,
\end{cases}
\end{equation*}
where $\rho_0 = \rho_0(z) = \kappa R/ z$.

Guessing a solution of the form $w(\rho) = \rho^{m + 1} e^{-\rho/2} F(\rho)$ we find that $F(\rho) = F(a_0,b_0;\rho)$ is a solution of (\ref{eq:CHGE}) on the interval $(0,\rho_0)$ with $a_0(z) = m +1 - z$, $b_0 = 2m + 2$, and boundary condition $2F'(a_0,b_0;\rho_0) = (1 -  2 z \gamma/\kappa)F(a_0,b_0;\rho_0)$. When $\gamma = \infty$ the boundary condition $F(a_0,b_0;\rho_0) = 0$ is immediate from the form of the ansatz. Thus, the original radial eigenfunctions $\varphi$ are multiples of $F(a_0,b_0;2\sqrt{|E|}r)e^{-\sqrt{|E|}r}$.

The general solution of
(\ref{eq:CHGE}) is a linear combination of the confluent hypergeometric functions of the first and second kind, $M(a_0,b_0;\rho)$and $U(a_0,b_0;\rho)$, respectively. It follows from formulas 13.4.21, 13.5.5, and 13.5.7 in \cite{AS} that
\[U'(a_0,b_0;\rho) = -a_0\frac{\Gamma(b_0)}{\Gamma(a_0+1)}\rho^{-b_0} + o(\rho^{-b_0}),\quad \text{as } \rho \to 0,\] 
so that $U'(a_0,b_0;\rho) \not\in L^2(B(R))$ since $b_0 = n-1$. Thus, $F(\rho)$ must have no $U(a_0,b_0;\rho)$ component to guarantee that $\varphi \in H^1(B(R))$. This shows that $F(\rho) = M(a_0,b_0;\rho)$.

Finally, using the boundary condition we computed for $F$ and the formula $M'(a,b;\rho) = \frac{a}{b}M(a+1,b+1;\rho)$ \cite[formula 13.4.8]{AS} we deduce that $z$ must satisfy the transcendental equation (\ref{eq:trans}). The definition of $z$ shows that a negative eigenvalue $E$ satisfies $E = -\kappa^2/4z^2$ for some solution $z$.

\bigskip

\noindent \textbf{Part (ii):} The same calculation shows that on $\R^n$ the eigenfunctions $\varphi$ are multiples of (\ref{eq:SchroEF}) with eigenvalue $-\kappa^2/4z^2$. Note that if $a$ and $b$ are fixed and $a \notin \{0,-1,-2,\dots\}$ then $M(a,b;\rho) \sim \rho^{a-b}e^\rho$ as $\rho \to \infty$ (see \cite[13.1.4]{AS}). If $a \in \{0,-1,-2,\dots\}$ then $M(a,b;\rho)$ is a polynomial of degree $-a$ (see (\ref{eq:ps})). Thus, we must have $a_0 \in \{0,-1,-2,\dots\}$ to ensure $\varphi$ tends to zero as $r \to \infty$. It follows from the definition of $a_0 = m+1 - z$ that $z = \kappa(2 \sqrt{|E|})^{-1}$ and $E = -\kappa^2/4(j + m)^2$ for $j \in \N$, where $m = (n-3)/2$.   

\bigskip

\noindent \textbf{Part (iii):} We split the the proof into two cases depending on the size of $|\gamma|$. Recall that $\kappa > 0$ and $m = (n-3)/2$. Let $0 < \epsilon < 1$.

\bigskip

\noindent \textbf{Case 1: $|\gamma| < \kappa/(2(m+1))$:} Put
\[z_-(R) = m+1 - e^{-(1 - \epsilon)\kappa R/(m+1)} \quad \text{and} \quad z_+ = m+1.\]
We show that there exists a $z_* = z_*(R) \in (z_-(R),z_+)$ that solves (\ref{eq:trans}) for all $R$ sufficiently large, and $z_*(R) \nearrow m + 1$ exponentially fast, as $R \to \infty$. We do this by applying the intermediate value theorem to the function 
\[\M(z) = 2\frac{a_0}{b_0} M(a_0 + 1,b_0 + 1; \rho_0) - (1 - 2\gamma z/\kappa)M(a_0,b_0; \rho_0),\]
where recall that $a_0 = a_0(z) = m + 1-  z$, $b_0 = 2 m + 2$, and $\rho_0 = \rho_0(z) = \kappa R/z$.
(Note that $\M(z) = 0$ if and only if $z$ satisfies the transcendental equation (\ref{eq:trans}).) The desired convergence for $E(R) = -\kappa^2/4z_*(R)^2$ as $R \to \infty$ then follows immediately. 

We show that $\M(z_+)< 0$ and that $\M(z) > 0$ for $z  \leq z_-(R)$ in order to apply the intermediate value theorem on the interval $[z_-,z_+]$ for each fixed $R$ sufficiently large. To prove these estimates on $\M$ we use the power series representation 
\begin{equation}
\label{eq:ps}
M(a,b;\rho) = \sum_{k = 0}^{\infty} \frac{a^{(k)}}{b^{(k)}} \frac{\rho^k}{k!}, \quad \text{for } a \in \R,~ b \notin \{0,-1,-2,\dots\},
\end{equation}
where $a^{(k)}$ is the rising factorial. That is, $a^{(0)} = 1$ and $a^{(k)} = a(a+1) \cdots (a + k - 1)$ for $k \geq 1$. Observe that $z \mapsto M(a_0(z),b;\rho_0(z))$ is continuous for each fixed $R$, since the series converges uniformly in $z$ on compact subsets of $(0,\infty)$ by the Weierstrass M-test. 

Note that $M(a_0,b_0;~\cdot~)$ is positive by (\ref{eq:ps}) when $0 < z < m+1$ because $a_0(z) > 0$. Thus, as soon as we prove that a solution $0 < z_* < m+1$ exists we know that it corresponds to the first eigenvalue $E$ of (\ref{eq:Schro}) since the ground state is the unique positive eigenfunction.

\bigskip

First we prove $\M(z_+) < 0$ for each $R > 0$. Indeed, since $a_0(m+1) = 0$ we know
\[\M(z_+) = \M(m+1) = -(1 - 2 \gamma (m+1)/\kappa)M(0,b_0;\rho_0) < 0\]
because $\gamma < \kappa/(2(m+1))$ and $M(0,b_0;~\cdot~)$ is identically 1. 

\bigskip

Now we prove $\M(z) > 0$ when $z \leq z_-(R)$ and $R$ is sufficiently large. First choose $\delta = \delta(\gamma) \in (0,1)$ large enough that $z \mapsto (\delta + 2 \gamma z/\kappa)$ is uniformly positive on $[0,m+1]$, which is possible since $\gamma > -\kappa/(2(m+1))$.  Next, observe that there is a $K = K(\gamma,m)$ independent of $z$ such that
\begin{equation}
\label{eq:FactorLB}
2\frac{k + m + 1 -  z}{k + 2m + 2} \geq 1 + \delta, \quad \text{for each } k \geq K.
\end{equation}
\noindent The $k^{\text{th}}$ coefficient of the power series of the first term of $\M$ satisfies the following identity, by considering the first and last factors in the rising factorials:
\[2 \frac{a_0}{b_0} \frac{(a_0 + 1)^{(k)}}{(b_0 + 1)^{(k)}} = 2\frac{m + 1 -  z}{2m + 2}\frac{(m + 2 - z)^{(k)}}{(2 m + 3)^{(k)}} = 2\frac{k + m + 1 - z}{k + 2m + 2} \frac{(m + 1 - z)^{(k)}}{(2 m + 2)^{(k)}},\]
for each $k \geq 0$. Applying the lower bound (\ref{eq:FactorLB}) we have
\begin{equation}
\label{eq:coef}
2 \frac{a_0}{b_0} \frac{(a_0 + 1)^{(k)}}{(b_0 + 1)^{(k)}} \geq (1 + \delta) \frac{a_0^{(k)}}{b_0^{(k)}}, \quad \text{for each } k \geq K.
\end{equation}

Dropping the first $K$ terms in the power series of $M(a_0 + 1,b_0 + 1;\rho_0)$, applying (\ref{eq:coef}), and adding and subtracting $1 + \delta$ times the first $K$ terms in the series for $M(a_0,b_0;\rho_0)$ shows
\begin{align}
2\frac{a_0}{b_0} M(a_0 + 1,b_0 + 1; \rho_0) \geq (1 + \delta) \sum_{k = K}^{\infty} \frac{a_0^{(k)}}{b_0^{(k)}} \frac{\rho_0^k}{k!} = (1 + \delta) M(a_0,b_0;\rho_0) - (1 + \delta) \sum_{k = 0}^{K - 1} \frac{a_0^{(k)}}{b_0^{(k)}} \frac{\rho_0^k}{k!}. \nonumber
\end{align}
Subtracting and adding $(2\gamma z/\kappa)M(a_0,b_0;\rho_0)$ to the right side shows
\begin{equation}
\label{eq:EE}
2\frac{a_0}{b_0} M(a_0 + 1,b_0 + 1; \rho_0) \geq (1 - 2\gamma z/\kappa) M(a_0,b_0;\rho_0)  + \E(\rho_0,z),
\end{equation}
where
\begin{equation*}
\E(\rho_0,z) = (\delta + 2\gamma z/\kappa) M(a_0,b_0;\rho_0) - (1 + \delta) \sum_{k = 0}^{K - 1} \frac{(m + 1 -  z)^{(k)}}{(2 m + 2)^{(k)}} \frac{\rho_0^k}{k!}.
\end{equation*}

To conclude that $\M(z) > 0$ from (\ref{eq:EE}) we will show that $\E(\rho_0,z) > 0$ for $z \leq z_-(R)$, when $R$ is sufficiently large. In what follows $C$ is a positive constant that is independent of both $k$ and $z$ and may change from line to line. Since $z \mapsto (\delta + 2 \gamma z/\kappa)$ is uniformly positive on $[0,m+1]$ by our choice of $\delta$ and the second term of $\E(\rho_0,z)$ satisfies
\begin{equation}
\label{eq:MExpBd}
(1 + \delta) \sum_{k = 0}^{K - 1} \frac{(m + 1 -  z)^{(k)}}{(2 m + 2)^{(k)}} \frac{\rho_0^k}{k!} \leq C(1 + \rho_0^K),
\end{equation}
to obtain $\E(\rho_0,z) > 0$ it suffices to show that \[M(a_0,b_0;\rho_0) \geq Ce^{(\epsilon/2)\rho_0}.\] 
(Note $\rho_0 = \kappa R/z \geq \kappa R/m \to \infty$, as $R \to \infty$ and so $e^{(\epsilon/2)\rho_0}$ dominates $\rho_0^K$.)

We estimate the factor $a_0^{(k)}/b_0^{(k)}$ from below using that $m + 1 -  z > 0$ and converting the rising factorials to ordinary factorials so that for $k \geq 1$,
\[\frac{a_0^{(k)}}{b_0^{(k)}} = \frac{(m + 1 -  z)^{(k)}}{(2 m + 2)^{(k)}} \geq (m + 1 -  z) \frac{(k-1)!}{(2 m + 2)^{(k)}} = (m + 1 -  z) \frac{(k-1)!(2m + 1)!}{(2 m + 2 + (k-1))!}.\]
Noting that $(2m + 1)!$ only depends on the dimension $n$ and making cancellations leads to the lower bound
\[\frac{a_0^{(k)}}{b_0^{(k)}} \geq (m + 1 -  z) \frac{C}{(k + 2 m + 1)^{2 m + 2}}, \quad \text{for } k \geq 0.\]

Using this lower bound, observing $(k + 2m + 1)^{-(2m + 2)} \geq C(1 - \epsilon/2)^k$, and using the Taylor series for $e^\rho$, we have
\begin{align}
M(a_0,b_0;\rho_0) \geq &C(m + 1 -  z) \sum_{k = 0}^{\infty} \frac{1}{(k + 2m + 1)^{2m+2}} \frac{\rho_0^k}{k!}  \nonumber \\
\geq &C(m + 1 -  z) \sum_{k = 0}^{\infty} \frac{((1 - \epsilon/2)\rho_0)^k}{k!}  \nonumber \\
= &C(m + 1 -  z)e^{(1 - \epsilon/2) \rho_0}. \nonumber 
\end{align}
Finally, using that $z < z_-(R) \leq m + 1 - e^{-(1- \epsilon) \rho_0}$ shows that 
\[(m + 1 -  z)e^{(1 - \epsilon/2) \rho_0} \geq e^{(\epsilon/2) \rho_0},\] 
and hence (\ref{eq:MExpBd}) holds.

\bigskip

Since $z \mapsto \M(a_0(z),b_0;\rho_0(z))$ is continuous the intermediate value theorem shows that there exists a solution $z_*$ to (\ref{eq:trans}) that satisfies $z_-(R) < z_* < z_+ = m + 1$. Using that $m + 1 = (n-1)/2$ we have that
\[\frac{n-1}{2} - \exp\bigg\{-(1 - \epsilon)\frac{2\kappa}{n-1} R\bigg\} \leq z_* \leq \frac{n-1}{2},\]
for all $R$ sufficiently large. This shows the first estimate in part (iii) of the lemma for $|\gamma| < \kappa/(2(m+1))$. Hence,
\begin{equation}
\label{eq:Easymp}
E(R) = -\frac{\kappa^2}{4z_*(R)^2} = -\frac{\kappa^2}{(n-1)^2} + O\bigg(\exp\bigg\{-(1 - \epsilon)\frac{2 \kappa }{n-1} R\bigg\}\bigg), \quad \text{as } R \to \infty.
\end{equation}
This completes the proof of part (iii) of the lemma for $|\gamma| < \kappa/(2(m+1))$.

\bigskip

\noindent \textbf{Case 2: $\gamma \in [\kappa/(2(m+1)),\infty]$:} In the remainder of the proof let $E(R,\gamma)$ denote the first eigenvalue of the Schr\"odinger problem (\ref{eq:Schro}). To prove the asymptotic for $E(R,\gamma)$ when $\gamma \in [\kappa/(2(m+1)),\infty]$ notice that $\gamma \mapsto E(R,\gamma)$ is increasing for $\gamma \in (-\infty,\infty]$ by the min-max principle. In particular, $E(R,0) \leq E(R,\gamma) \leq E(R,\infty)$ for each $\gamma > 0$. Thus, applying formula (\ref{eq:Easymp}) with $\gamma = 0$ gives the desired lower bound for $E(R,\gamma)$ and it suffices to prove the right side of (\ref{eq:Easymp}) is also an upper bound for the first Dirichlet eigenvalue $E(R,\infty)$. 

By part (ii) of the lemma, the ground state of the $-\Delta - \kappa/r$ on $\R^n$ is $\exp\{-\sqrt{|E(\R^n)|} ~ r\}$ with eigenvalue $E(\R^n) = -\kappa^2/(n-1)^2$. Let $\chi$ be a smooth radial cutoff function that is 1 for $r < 1 - \epsilon/2$, decreases to zero over $1 - \epsilon/2 \leq r \leq 1$, and is zero for $r > 1$. The function $f(r)$ defined by $\chi(r/R) \exp\{-\sqrt{|E(\R^n)|} ~ r\}$ restricted to the ball $B(R)$ is a valid trial function for the first eigenvalue $E(R,\infty)$ of (\ref{eq:Schro}) so that
\begin{align}
E(R,\infty) &\leq \frac{\int_{B(R)} f(-\Delta - \kappa/r)f \, dV}{\int_{B(R)} f^2 \, dV}\nonumber \\ 
&= \frac{E(\R^n)\int_{B((1 - \epsilon)R)} \exp\{-2\sqrt{|E(\R^n)|} ~ r\} \, dV  + \int_{B(R) \setminus B((1 - \epsilon)R)} f(-\Delta - \kappa/r)f \, dV}{\int_{B((1 - \epsilon)R)} \exp\{-2\sqrt{|E(\R^n)|} ~ r\} \, dV + \int_{B(R) \setminus B((1 - \epsilon)R)} f^2 \, dV}\nonumber \\
&= E(\R^n) + O\bigg(\exp\{-(1 - \epsilon)\frac{2 \kappa }{n-1} R\}\bigg), \quad \text{as } R \to \infty. \nonumber
\end{align}
The last equality follows from using the decay of $\exp\{-\sqrt{|E(\R^n)|} ~ r\}$ and that $\chi(r/R)$ has bounded $r$-derivatives. These facts show that the second terms in the numerator and denominator are the same size as the error term in (\ref{eq:Easymp}) and gives the desired upper bound since $E(\R^n) = -\kappa^2/(n-1)^2$. 

\bigskip

This completes the proof of the asymptotic for $E(R,\gamma)$ for all $\gamma \in (-\kappa/(2(m+1)),\infty]$. Note that the asymptotic for $z_*$ in part (iii) of the lemma follows from the formula $z_* = \kappa (2 \sqrt{|E(R,\gamma)|})^{-1}$.
\end{proof}

\section{\textbf{Proof of Theorem \ref{thm:main}}}

We prove Theorem \ref{thm:main} by getting an upper bound on $\lambda_2(\D_{\theta})$ and a lower bound on $\lambda_1(\D_{\theta})$ in terms of the first eigenvalue of the infinite cone $\C_{\theta}$ (defined in (\ref{eq:InfCone})). Specifically, for each $\epsilon > 0$ we show that there exists a $C > 0$ such that
\begin{equation}
\label{eq:UB}
\lambda_2(\D_{\theta}) \leq \lambda_1(\C_{\theta}) + C \exp\{2(1 - \epsilon)\alpha/ \sin(\theta/2)\}, \quad \text{as } \theta \to 0,
\end{equation}
and
\begin{equation}
\label{eq:LB}
\lambda_1(\D_{\theta}) = \lambda_1(\C_{\theta}) + O(\exp\{2(1 - \epsilon)\alpha/ \sin(\theta/2)\}), \quad \text{as } \theta \to 0.
\end{equation}
Combining these bounds and using that $\sin(\theta/2) \leq \theta/2$ we conclude that there is a $C > 0$ such that $(\lambda_2 - \lambda_1)(\D_{\theta}) \leq C \exp\{-4(1 - \epsilon) |\alpha|/\theta\}$ for all $\theta$ sufficiently small. This concludes the proof of Theorem \ref{thm:main}.

To prove both inequality (\ref{eq:UB}) and (\ref{eq:LB}) we will need to use the ground state of the infinite cone $\C_{\theta}$ to construct trial functions. A direct calculation shows the $L^2$-normalized ground state of the Robin Laplacian on $\C_{\theta}$ is
\begin{equation}
\label{eq:PhiDef}
    \phi_{\theta}(x,y) = A_{\theta} e^{\alpha x/\sin(\theta/2)}, \quad \text{when } \alpha < 0 \text{ and } \theta < \pi,
\end{equation}
with eigenvalue 
\[\lambda_1(\C_{\theta}) = -\alpha^2/\sin^2(\theta/2),\]
where the constant in (\ref{eq:PhiDef}) satisfies
\begin{equation}
\label{eq:Atheta}
A_{\theta}^{-2} = 2^{-n} |\alpha|^{-n} \omega_{n-1} \Gamma(n) \tan^{n-1}(\theta/2) \sin^n(\theta/2).
\end{equation}
Here $\omega_{n-1}$ is the volume of the $(n-1)$-dimensional unit sphere and $\Gamma$ is the Gamma function. 

\bigskip

\subsection{Upper bound on $\lambda_2(\D_{\theta})$:}  In order to get an upper bound on $\lambda_2(\D_{\theta})$ we transplant two copies of $\phi_{\theta}$ to construct a trial function. Fix $0 < \epsilon < 1/2$ and let $\chi = \chi(x,y)$ denote a smooth nonnegative cutoff function on $\R^n$ that is independent of $y$, equals 1 for $x \leq 1 - \epsilon$, 0 for $x \geq 1$, and is less than or equal to 1 everywhere. Let $\psi_{\theta} = \chi \phi_{\theta}$ be a cut-off cone ground state and 
\[F(x,y) = (x+1,y) \quad \text{and} \quad G(x,y) = (1-x,y)\]
be rigid motions. Define the trial function $\widetilde \phi_{\theta} : \D_{\theta} \to \R$ by putting a cut-off cone ground state at each vertex of $\D_{\theta}$:
\begin{equation*}
\widetilde \phi_{\theta} = \frac{(\psi_{\theta} \circ F) - (\psi_{\theta} \circ G)}{\sqrt{2}}.
\end{equation*}
Notice that $\widetilde \phi_{\theta}$ is odd with respect to $x$, by construction. 

Since $\D_{\theta}$ is reflection symmetric in $x$ each eigenfunction with simple eigenvalue is either even or odd in $x$. The ground state must be even since it is non-negative. Thus, the odd function $\widetilde \phi_{\theta}$ is orthogonal to the ground state, and so it is a valid trial function for
$\lambda_2(\D_{\theta})$. By the min-max principle,
\begin{equation}
\label{eq:L2ub}
\lambda_2(\D_{\theta}) \leq \frac{\int_{\D_{\theta}} |\nabla \widetilde \phi_{\theta}|^2 \, dV + \alpha \int_{\partial \D_{\theta}} \widetilde \phi_{\theta}^2 \, dS }{\int_{\D_{\theta}} \widetilde \phi_{\theta}^2 \, dV} = \frac{\int_{\T_{\theta}} |\nabla \psi_{\theta}|^2 \, dV + \alpha \int_{ \Sigma_{\theta}} \psi_{\theta}^2 \, dS }{\int_{\T_{\theta}} \psi_{\theta}^2 \, dV},
\end{equation}
where we define
\begin{equation*}
\T_{\theta} = \C_{\theta} \cap \{x < 1\} \quad \text{and} \quad \Sigma_{\theta} = \partial \T_{\theta} \cap \{0 \leq x < 1\}
\end{equation*}
to be the truncated cone and the curved part of its boundary. 

A straightforward calculation (see below) will show there are remainder terms 
\[\Rem_i(\theta) = O(\exp\{2(1 - 2\epsilon)\alpha/ \sin(\theta/2)\}), \quad \text{as } \theta \to 0, \quad \text{for } i = 1,2,3,\] 
such that
\[\int_{\T_{\theta}} |\nabla \psi_{\theta}|^2 \, dV = \int_{\C_{\theta}} |\nabla \phi_{\theta}|^2 \, dV + \Rem_1(\theta), \quad \int_{ \Sigma_{\theta}} \psi_{\theta}^2 \, dS = \int_{\partial \C_{\theta}} \phi_{\theta}^2 \, dS + \Rem_2(\theta),\]
and
\[\int_{\T_{\theta}} \psi_{\theta}^2 \, dV = \int_{\C_{\theta}} \phi_{\theta}^2 \, dV + \Rem_3(\theta) = 1 + \Rem_3(\theta),\]
as $\theta \to 0$, where the final equality uses the $L^2$-normalization of $\phi_{\theta}$. Combining these estimates with the upper bound (\ref{eq:L2ub}) shows we have a remainder term $\Rem(\theta) = (\Rem_1(\theta) + \alpha \Rem_2(\theta))/(1 + \Rem_3(\theta)) = O(\exp\{2(1 - 2\epsilon)\alpha/ \sin(\theta/2)\})$ such that
\[\lambda_2(\D_{\theta}) \leq \frac{\int_{\C_{\theta}} |\nabla \phi_{\theta}|^2 \, dV + \alpha \int_{\partial \C_{\theta}} \phi_{\theta}^2 \, dS }{\int_{\T_{\theta}} \psi_{\theta}^2 \, dV} + \Rem(\theta) = \frac{\lambda_1(\C_{\theta})}{\int_{\T_{\theta}} \psi_{\theta}^2 \, dV} + \Rem(\theta) \leq \lambda_1(\C_{\theta}) + \Rem(\theta),\]
as $\theta \to 0$, since $\int_{\T_{\theta}} \psi_{\theta}^2 \, dV \leq 1$ and $\lambda_1(\C_{\theta}) < 0$. This proves (\ref{eq:UB}) since $\epsilon$ is arbitrary.

We estimate $\Rem_3(\theta)$; the other remainder terms are similar. The inequality $\int_{\T_{\theta}} \psi_{\theta}^2 \, dV \leq \int_{\C_{\theta}} \phi_{\theta}^2 \, dV$ follows from $\chi$ being less than 1. For an inequality in the opposite direction, since 
\[\int_{T_{\theta}} \psi_{\theta}^2 \, dV \geq \int_{\C_{\theta} \cap \{x < 1 - \epsilon\}} \phi_{\theta}^2 \, dV = \int_{\C_{\theta}} \phi_{\theta}^2 \, dV - \int_{\C_{\theta} \cap \{x \geq 1 -  \epsilon\}} \phi_{\theta}^2 \, dV,\]
it suffices to show the second term on the right is exponentially small. Since $\phi_{\theta}$ is constant in $y$ we have that
\begin{align*}
\int_{\C_{\theta} \cap \{x \geq 1 - \epsilon\}} \phi_{\theta}^2 \, dV &= A_{\theta}^2 \int_{1 - \epsilon}^{\infty} \exp\{2\alpha x/\sin(\theta/2)\} \text{Vol}_{n-1}(B(\tan(\theta/2)x)) \, dx\\
&= \Gamma(n)^{-1} \int_{2(1 - \epsilon)|\alpha|/\sin(\theta/2)}^{\infty} \exp\{-z\} z^{n-1} \, dz,
\end{align*}
where we have made the change of variables $z = 2|\alpha|x/\sin(\theta/2)$ and used the definition of $A_{\theta}$ in (\ref{eq:Atheta}). Since $\exp\{-z\} z^{n-1} \lesssim \exp\{-\frac{1 - 2\epsilon}{1 - \epsilon} z\}$ for all sufficiently large $z$, by integrating we have
\[\int_{\C_{\theta} \cap \{x \geq 1 - \epsilon\}} \phi_{\theta}^2 \, dV \lesssim \exp\{2(1 - 2\epsilon) \alpha/\sin(\theta/2)\}, \quad \text{as } \theta \to 0,\]
which shows that $\Rem_3(\theta) = O(\exp\{2(1 - 2\epsilon) \alpha/\sin(\theta/2)\})$.

\bigskip

\subsection{Lower bound on $\lambda_1(\D_{\theta})$:} We prove the lower bound by viewing $\lambda_1(\D_{\theta}) - \lambda_1(\C_{\theta})$ as the first eigenvalue of a problem on the truncated cone $\T_{\theta}$. Then we ``push out" the problem on $\T_{\theta}$ to a radial problem on a spherical sector $\Ss_{\theta}$. Finally, we transform the problem on $\Ss_{\theta}$ into the Schr\"odinger eigenvalue problem (\ref{eq:Schro}) whose solutions were analyzed in Lemma \ref{lem:SF}. We break up the proof into four steps.

\bigskip

\noindent \textbf{Step 1:} Let $u$ denote the $L^2$-normalized ground state of $\D_{\theta}$ restricted and translated to $\T_{\theta} = \C_{\theta} \cap \{x < 1\}$. Since the ground state of $\D_{\theta}$ is even in $x$, we know $u$ has Neumann boundary conditions on the flat part of the boundary defined by
\[\Gamma_{\theta} = \overline{\T_{\theta}} \cap \{x = 1\}\] 
and retains its Robin boundary conditions on the complement $\Sigma_{\theta} = \partial \T_{\theta} \cap \{0 \leq x < 1\}$. 

For notational convenience write $\phi$ instead of $\phi_{\theta}$ for the Robin ground state of $\C_{\theta}$. A direct calculation shows that the ratio $v = u/\phi$ is the first eigenfunction of the $\tau$-Laplacian operator $\Delta_{\tau}(\cdot) = \tau^{-1} \text{div}(\tau \nabla(\cdot))$ with weight $\tau = \phi^2$. More specifically, $v$ satisfies
\begin{equation}
\label{eq:fLapT}
\begin{cases}
-\Delta_{\tau} v = \mu_1 v & \text{on } \T_{\theta},\\
\partial_{\nu} v + (\alpha/\sin(\theta/2))v = 0 & \text{on } \Gamma_{\theta},\\
\partial_{\nu} v = 0 & \text{on }  \Sigma_{\theta},
\end{cases}
\end{equation}
where $\mu_1 = \mu_1(\T_{\theta}) = \lambda_1(\D_{\theta}) - \lambda_1(\C_{\theta})$ is the first eigenvalue of (\ref{eq:fLapT}) since $v$ is positive.

Since the weight $\phi^2$ is uniformly positive on $\overline{\T_{\theta}}$, the above eigenvalue problem has a discrete spectrum with Rayleigh quotient 
\[\frac{\int_{\T_{\theta}} |\nabla f| \phi^2 \, dx + (\alpha/\sin(\theta/2))\int_{\Gamma_{\theta}} f^2 \phi^2 \, dS}{\int_{\T_{\theta}} f^2 \phi^2 \, dx}, \quad \text{for } f \in H^1(\T_{\theta}) \setminus \{0\}.\]
Note that $\mu_1(\T_{\theta}) < 0$ by using the min-max principle with a constant trial function.
\bigskip

\noindent \textbf{Step 2:} Consider the spherical coordinates $(r,\xi)$, where $\xi = (\xi_1,\dots, \xi_{n-1})$ is the vector of angle coordinates and $\xi_1$ is the angle between the point $(x,y)$ and the $x$-axis. We will show $\mu_1(\Ss_{\theta}) \leq (1 +o(1))\mu_1(\T_{\theta})$, as $\theta \to 0$, where $\mu_1(\Ss_{\theta})$ is the first eigenvalue of a certain radial problem on the spherical sector 
\[\Ss_{\theta} = \{(r,\xi) : 0 < r < \cos(\theta/2)^{-1}, ~ |\xi_1| < \theta/2\}.\]
Note that this sector is obtained by ``pushing out" the flat part of the boundary of $\T_{\theta}$. To define this problem on $\Ss_{\theta}$ let the ``push in" diffeomorphism $P : \overline{\Ss_{\theta}} \to \overline{\T_{\theta}}$ be defined in spherical coordinates by
\[P(r,\xi) = (p r, \xi),\] 
where $p = p(\xi_1) = \cos(\theta/2)\sec(\xi_1)$. Also define the Rayleigh quotient as
\[\frac{\int_{\Ss_{\theta}} |\nabla f|^2 (\phi^2 \circ P) \, dV + (\beta/\sin(\theta/2))\int_{P^{-1}(\Gamma_{\theta})} f^2 (\phi^2 \circ P) \, dS}{\int_{\Ss_{\theta}} f^2 (\phi^2 \circ P) \, dV}, \quad \text{for } f \in H^1(\Ss_{\theta}) \setminus \{0\}.\]
The parameter $\beta = \beta(\theta)$ is equal to $(1 + o(1)) \alpha$ and will be defined precisely at the end of Step 2. The above Rayleigh quotient corresponds to the weighted $\sigma$-Laplacian eigenvalue problem on $\Ss_{\theta}$ with $\sigma = \phi^2 \circ P$, with Robin conditions on the spherical part of the boundary $P^{-1}(\Gamma_{\theta}) = \partial \Ss_{\theta} \cap \partial B(\cos(\theta/2)^{-1})$, and Neumann conditions on $\Sigma_{\theta}$.

We take $f = v \circ P$ as a trial function for $\mu_1(\Ss_{\theta})$ so that 
\begin{equation}
\label{eq:SEvalBd}
\mu_1(\Ss_{\theta}) \leq \frac{\int_{\Ss_{\theta}} |\nabla [v \circ P]|^2 (\phi^2 \circ P) \, dV + (\beta/\sin(\theta/2))\int_{P^{-1}(\Gamma_{\theta})} [v \circ P]^2 (\phi^2 \circ P) \, dS}{\int_{\Ss_{\theta}} [v \circ P]^2 (\phi^2 \circ P) \, dV}.
\end{equation}
Let $J_P$ and $J_{P^{-1}}$ be the Jacobian matrices of $P$ and $P^{-1}$, $I_{\Ss_{\theta}}$ and $I_{\T_{\theta}}$ be the identities on $\overline{\Ss_{\theta}}$ and $\overline{\T_{\theta}}$, and $I$ be the identity matrix. After writing $P$ and $P^{-1}$ in rectangular coordinates a calculation shows that
\begin{equation}
\label{eq:SigmaEst}
\lVert P - I_{\Ss_{\theta}} \rVert_{\infty}, ~\lVert J_P - I \rVert_{op,\infty}, ~\lVert P^{-1} - I_{\T_{\theta}} \rVert_{\infty}, ~\lVert J_{P^{-1}} - I \rVert_{op,\infty} \to 0, \quad \text{as } \theta \to 0.
\end{equation}
Here $\lVert \cdot \rVert_{\infty}$ is the supremum norm and $\lVert \cdot \rVert_{op,\infty}$ is the supremum norm of the (pointwise) operator norm of a matrix valued function. 

We calculate the denominator of (\ref{eq:SEvalBd}) by changing variables and using (\ref{eq:SigmaEst}) so that
\[\int_{\Ss_{\theta}} [v \circ P]^2 (\phi^2 \circ P) \, dV = \int_{\T_{\theta}} v^2 \phi^2 |\det(J_{P^{-1}})| \, dV = (1 + o(1))\int_{\T_{\theta}} v^2 \phi^2 \, dV, \quad \text{as } \theta \to 0.\]

Now we prove an upper bound on the gradient term in (\ref{eq:SEvalBd}): the chain rule gives
\[\int_{\Ss_{\theta}} |\nabla [v \circ P]|^2 (\phi^2 \circ P) \, dV \leq \lVert J_P \rVert_{\infty}^2 \int_{\Ss_{\theta}} |(\nabla v) \circ P|^2 (\phi^2 \circ P) \, dV \leq M(\theta) \int_{\T_{\theta}} |\nabla v|^2  \phi^2  \, dV,\]
where $M(\theta) = \lVert J_P \rVert_{\infty}^2 \lVert \det(J_{P^{-1}})\rVert_{\infty} = 1 + o(1)$, as $\theta \to 0$ by applying the estimates in (\ref{eq:SigmaEst}). 

Finally, to handle the boundary term we prove the lower bound
\begin{equation}
\label{eq:BdryTerm}
\int_{P^{-1}(\Gamma_{\theta})} [v \circ P]^2 (\phi^2 \circ P) \, dS \geq m(\theta) \int_{\Gamma_{\theta}} v^2 \phi^2 \, dS,
\end{equation}
where $m(\theta) = 1 + o(1)$ as $\theta \to 0$. By parametrizing the spherical cap $P^{-1}(\Gamma_{\theta})$ with the map $P^{-1}|_{\Gamma_{\theta}}$ we have
\begin{align*}
\int_{P^{-1}(\Gamma_{\theta})} [v \circ P]^2 (\phi^2 \circ P) \, dS = \int_{\Gamma_{\theta}} v^2 \phi^2 \sqrt{\det(G)}\, dy \geq m(\theta) \int_{\Gamma_{\theta}} v^2 \phi^2\, dy,
\end{align*}
where $G = G(y)$ is the Gramian matrix with entries $G_{ij} = \langle \partial_i P^{-1}|_{\Gamma_{\theta}}, \partial_j P^{-1}|_{\Gamma_{\theta}} \rangle$ with $P^{-1}|_{\Gamma_{\theta}}$ written in rectangular coordinates $y_i$ for $i = 1, \dots, n-1$ and $m(\theta) = \min\{\sqrt{\det(G)}\}$. The estimates in (\ref{eq:SigmaEst}) imply that $\partial_i P^{-1}|_{\Gamma_{\theta}} = e_i + o(1)$, where $e_i$ is the $i^{\text{th}}$ standard basis vector, so that $G_{ij} = \delta_{ij} + o(1)$ uniformly as $\theta \to 0$. Thus,  $m(\theta) = 1 + o(1)$ and (\ref{eq:BdryTerm}) holds.

Applying the estimates on each part of the Rayleigh quotient to (\ref{eq:SEvalBd}) shows that
\[\mu_1(\Ss_{\theta}) \leq (1 + o(1)) \bigg[M(\theta)\int_{\T_{\theta}} |\nabla v|^2 \phi^2\, dV + (\beta(\theta) m(\theta)/\sin(\theta/2))\int_{\Gamma_{\theta}} v^2 \phi^2 \, dS \bigg] \bigg/ \int_{\T_{\theta}} v^2 \phi^2 \, dV,\]
as $\theta \to 0$. Choosing $\beta(\theta) = M(\theta) m(\theta)^{-1} \alpha = (1 + o(1))\alpha$ and factoring out the $M(\theta)$ from the numerator shows that $\mu_1(\Ss_{\theta}) \leq (1 +o(1))\mu_1(\T_{\theta})$, as $\theta \to 0$, completing the proof of Step 2.

\bigskip

\noindent \textbf{Step 3:} We claim that $\mu_1(\Ss_{\theta})$ is the first eigenvalue of the problem
\begin{equation}
\label{eq:BallEv}
\begin{cases}
      -\Delta_{\sigma} w = \mu w \quad &\text{on} ~ B(\cos(\theta/2)^{-1}), \\
      \partial_r w + (\beta/\sin(\theta/2)) w = 0 \quad &\text{on} ~ \partial B(\cos(\theta/2)^{-1}),
\end{cases}
\end{equation} 
where $\sigma = \phi^2 \circ P$ and $\partial_r$ is the radial derivative (i.e.\ the normal derivative). Indeed, $\sigma = \exp\{2 \alpha \cot(\theta/2) r\}$ is radial, and so the first eigenfunction $w$ is also radial (by uniqueness), and therefore the restriction $w|_{\Ss_{\theta}}$ has Neumann boundary conditions on $ \Sigma_{\theta}$. The restriction is still an eigenfunction, and since it has the correct boundary conditions and is non-negative it must be a ground state of the problem on $\Ss_{\theta}$ and so $\mu_1(\Ss_{\theta})$ is the first eigenvalue of (\ref{eq:BallEv}). 

Now we transfer the $\theta$ dependence of $\sigma$ to the domain by rescaling the radial coordinate by $r \mapsto \cot(\theta/2)r$ and letting $R = \sin(\theta/2)^{-1}$ and $\tilde w(r) = w(\tan(\theta/2)r)$ so that
\begin{equation}
\label{eq:hLap}
\begin{cases}
      -\Delta_{\tilde \sigma} \tilde w = \nu_1 \tilde w \quad &\text{on} ~ B(R), \\
      \partial_r \tilde w + \tilde \beta \tilde w = 0 \quad &\text{on} ~ \partial B(R),
\end{cases}
\end{equation} 
where $\tilde \beta = \tilde \beta(\theta) = \cos(\theta/2)^{-1} \beta(\theta)$, $\tilde \sigma(r) = \sigma(\tan(\theta/2) r) = e^{2 \alpha r}$, and $\nu_1(R) = \tan^2(\theta/2) \mu_1(\Ss_{\theta})$.

\bigskip

\noindent \textbf{Step 4:} We make a final change of variables to eliminate the weight $\tilde \sigma$ and Robin parameter $\alpha$ so that (\ref{eq:hLap}) becomes the Schr\"odinger eigenvalue problem (\ref{eq:Schro}) with a shifted potential. A direct calculation shows that the radial function $\varphi(r) = \tilde w(r) /e^{-\alpha r}$ satisfies the Schr\"odinger eigenvalue problem
\begin{equation*}
\begin{cases}
      (-\Delta - \frac{\kappa}{r}) \varphi = (\nu_1 - \alpha^2) \varphi \quad &\text{on} ~ B(R), \vspace{1mm}\\
      \partial_r \varphi + (\tilde \beta - \alpha)\varphi = 0 \quad &\text{on} ~ \partial B(R),
\end{cases}
\end{equation*}
where $\kappa = (n-1)|\alpha|$. Since $\varphi \geq 0$ we know that $\nu_1 - \alpha^2$ is the first eigenvalue of this problem.

The definition of $\tilde \beta$ (at the end of Step 3) shows that $\tilde \beta -\alpha = o(1)$ as $R \to \infty$. Thus, $\gamma < \tilde \beta - \alpha$ for all $R$ sufficiently large, where $\gamma \in (-|\alpha|,0)$ is the fixed Robin parameter in the Schr\"odinger problem (\ref{eq:Schro}) with eigenvalue $E(R)$. Because the eigenvalues of Schr\"odinger operators are increasing functions of the Robin parameter we have that $E(R) \leq \nu_1(R) - \alpha^2$ for $R$ sufficiently large. Using that $R = \sin(\theta/2)^{-1}$
and combining this inequality with those from Steps 1, 2, and 3, there is a constant $C > 0$ such that for all small $\theta$,
\[C(R^2 - 1)(E(R) + \alpha^2) \leq C(R^2 - 1)\nu_1(R) = C\mu_1(\Ss_{\theta}) \leq \mu_1(\T_{\theta}) = \lambda_1(\D_{\theta}) - \lambda_1(\C_{\theta}) < 0.\]

Fix an $\epsilon > 0$. Using part (iii) of Lemma \ref{lem:SF}, we have that $E(R) + \alpha^2 = O(\exp\{-2(1 - \epsilon)|\alpha|R\})$ as $R \to \infty$. Hence
\[\lambda_1(\D_{\theta}) - \lambda_1(\C_{\theta}) = O(\exp\{-2(1 - \epsilon)|\alpha|/\sin(\theta/2)\}) \quad \text{as } \theta \to 0,\]
where this last estimate follows by using that $R = \sin(\theta/2)^{-1}$ and that the factor of $R^2 - 1$ multiplying $E(R) + \alpha^2$ can be absorbed into the error term since $\epsilon$ is arbitrary. Thus, (\ref{eq:LB}) is proven.

\begin{remark}
To prove the lower bound for $\lambda_1(\D_{\theta})$ we used the map $P^{-1}$ to ``push out" the truncated cone $\T_{\theta}$ to the spherical sector $\Ss_{\theta}$. This idea was used to compute bounds on eigenvalues of thin isosceles triangles by Freitas in \cite[\S 3]{Freitas} for Dirichlet boundary conditions and by Laugesen and Suideja in \cite[proof of Lemma 5.2]{LS} for Neumann boundary conditions. Here we adapt the technique to Robin boundary conditions.
\end{remark}

\section{\textbf{Questions}}

Theorem \ref{thm:main} shows that Conjecture \ref{conj:Gap} fails to extend to $\alpha < 0$ for general convex $\D$. In a positive direction Laugesen \cite[Theorem III.8]{L} recently showed that Conjecture \ref{conj:Gap} holds and is sharp for all $\alpha \in (-\infty,\infty]$ for rectangular boxes (i.e.\ products of intervals). This inspires the question: \emph{Is there a general class of convex domains such that the gap inequality in Conjecture \ref{conj:Gap} holds when $\alpha < 0$?} Since the proof of Theorem \ref{thm:main} appears to rely on the fact that the domain $\D_{\theta}$ has vertices with small opening angle, we might consider minimizing $(\lambda_2 - \lambda_1)(\cdot)$ over convex domains with a uniformly bounded Lipschitz constant. 

Surprisingly, it seems that the gap still fails to have a positive lower bound over this restricted class of domains. To see this we sketch an approximation argument. Define the \emph{truncated double cone} domains
\[\D_{\theta,\epsilon} = t_{\epsilon} \{(x,y) \in \D_{\theta} : |x| < 1 - \epsilon\}, \quad \text{for } \epsilon \in [0,1),\]
where the rescaling $t_{\epsilon}$ is such that $\D_{\theta,\epsilon}$ has diameter 2 for each $\epsilon$ sufficiently small. (Note that  $t_{\epsilon} = 1 + o(1)$, as $\epsilon \to 0$.) For $\theta$ fixed, $\{\D_{\theta,\epsilon}\}_{\epsilon \in [0,1)}$ has a Lipschitz constant that is uniform in $\epsilon \in [0,1)$ since the angle between the vertical faces at $x = \pm t_{\epsilon} (1-\epsilon)$ of $\partial \D_{\theta,\epsilon}$ and the remainder of the boundary does not depend on $\epsilon$. In addition, $\D_{\theta,\epsilon} \to \D_{\theta}$ in the Hausdorff distance and $|\partial \D_{\theta,\epsilon}| \to |\partial \D_{\theta}|$ as $\epsilon \to 0$ so we expect that
\[\lambda_j(\D_{\theta,\epsilon}) \to \lambda_j(\D_{\theta}), \quad \text{as } \epsilon \to 0,\]
for each $j \in \N$; see \cite[Remark 4.33]{Henrot} for a discussion of Robin continuity results. 

Assuming the above limit, we can use Theorem \ref{thm:main} to find a ``diagonal sequence" $\epsilon(\theta)$ converging to zero such that
\[(\lambda_2 - \lambda_1)(\D_{\theta,\epsilon(\theta)}) \to 0, \quad \text{as } \theta \to 0,\]
and $\D_{\theta,\epsilon(\theta)}$ has diameter 2 for each $\theta$. Observe that the truncated domains $\{\D_{\theta,\epsilon(\theta)}\}_{\theta \in (0,\pi/2)}$ also have uniformly bounded Lipschitz constants. This shows that $\lambda_2 - \lambda_1$ can be made arbitrarily small among convex domains of a given diameter and with a uniformly bounded Lipschitz constant. This example demonstrates the difficulty of determining a general class of domains where Conjecture \ref{conj:Gap} holds when $\alpha < 0$, even with a weaker lower bound.

\section*{Acknowledgements}
The author would like to thank Richard Laugesen for his encouragement and many helpful conversations on this project, as well as Mark Ashbaugh for some enlightening conversations about special functions. The author gratefully acknowledges support from the University of Illinois Campus Research Board award RB19045 (to Richard Laugesen).

\bibliographystyle{plain}
\bibliography{refs}

\end{document}